\newtheorem{theorem}{Theorem}
\newtheorem{lemma}[theorem]{Lemma}
\newcommand{\be}{\begin{equation}}
\newcommand{\ee}{\end{equation}}
\newcommand{\bea}{\begin{eqnarray}}
\newcommand{\eea}{\end{eqnarray}}
\begin{document}
\title{The linear trace Harnack quadratic on a steady gradient Ricci soliton
satisfies the heat equation}
\author{Bennett Chow}
\author{Peng Lu$^{1}$}
\maketitle

All quantities shall be assumed $C^{\infty}$.\smallskip

\textbf{1. Evolution of the linear trace Harnack on steady and shrinking
gradient Ricci solitons\smallskip}

Suppose $\left(  \mathcal{M}^{n},g_{ij}\left(  t\right)  ,h_{ij}\left(
t\right)  \right)  $ satisfies the linearized Ricci flow $\frac{\partial
}{\partial t}g_{ij}=-2R_{ij}$ and $\frac{\partial}{\partial t}h_{ij}=\left(
\Delta_{L}h\right)  _{ij}$, where $\Delta_{L}$ denotes the Lichnerowicz
Laplacian.\footnotetext[1]{Addresses. Bennett Chow: Math. Dept., UC San Diego;
Peng Lu: Math. Dept., U of Oregon.} Define the matrix Harnack quantities (see
Hamilton \cite{H2})%
\[
M_{pq}=\Delta R_{pq}-\frac{1}{2}\nabla_{p}\nabla_{q}R+2R_{pijq}R^{ij}%
-R_{pk}R_{q}^{k},\quad P_{ipq}=\nabla_{i}R_{pq}-\nabla_{p}R_{qi}.
\]
Recall that if $X\left(  t\right)  $ is any time-dependent vector field on
$\mathcal{M}$, then the corresponding linear trace Harnack quantity $Z\left(
h,X\right)  =\operatorname{div}$$\left(  \text{$\operatorname{div}$}\left(
h\right)  \right)  +\left\langle \operatorname{Rc},h\right\rangle
+2\left\langle \text{$\operatorname{div}$}\left(  h\right)  ,X\right\rangle
+h\left(  X,X\right)  $ satisfies (see \cite{CH})%
\begin{align}
\left(  \frac{\partial}{\partial t}-\Delta\right)  Z &  =2h^{pq}\left(
M_{pq}+2P_{ipq}X^{i}+R_{pijq}X^{i}X^{j}\right)  \label{Z evolution for X}\\
&  \quad\;-4\left(  \nabla_{j}X^{i}-R_{j}^{i}\right)  \nabla^{j}\left(
\text{$\operatorname{div}$}(h)_{i}+h_{ik}X^{k}\right)  \nonumber\\
&  \quad\;+2\left(  \text{$\operatorname{div}$}\left(  h\right)  _{j}%
+h_{ij}X^{i}\right)  \left(  \frac{\partial X^{j}}{\partial t}-\Delta
X^{j}-R_{k}^{j}X^{k}\right)  \nonumber\\
&  \quad\;+2h_{ij}\left(  \nabla_{p}X^{i}-R_{p}^{i}\right)  \left(  \nabla
^{p}X^{j}-R^{pj}\right)  .\nonumber
\end{align}

Since $\frac{\partial}{\partial t}\operatorname{Rc}=\Delta_{L}%
\operatorname{Rc}$, we may take $h=\operatorname{Rc}$. Then $2Z\left(
\operatorname{Rc},X\right)  =\Delta R+2\left\vert \operatorname{Rc}\right\vert
^{2}+2\left\langle \nabla R,X\right\rangle +2\operatorname{Rc}\left(
X,X\right)  $, which is Hamilton's trace Harnack quadratic. Furthermore, if
$\operatorname{Rc}+\nabla\nabla f=0$, then $Z\left(  \operatorname{Rc},-\nabla
f\right)  =0$ by $\Delta R+2\left\vert \operatorname{Rc}\right\vert
^{2}=\langle\nabla R,\nabla f\rangle$ and $2R_{ij}\nabla_{j}f=\nabla_{i}R$.

Now consider the linear trace Harnack quantity for the linearized Ricci flow
on a steady gradient Ricci soliton.

\begin{lemma}
\label{Lemon 1}If $\left(  \mathcal{M}^{n},g\left(  t\right)  ,f\left(
t\right)  ,h\left(  t\right)  \right)  $ satisfies $\frac{\partial}{\partial
t}g=-2\operatorname{Rc}=2\nabla\nabla f$, $\frac{\partial f}{\partial
t}=\Delta f$, $\frac{\partial}{\partial t}h=\Delta_{L}h$, then $Z\left(
h,-\nabla f\right)  =\operatorname{div}$$\left(  \text{$\operatorname{div}$%
}\left(  h\right)  \right)  +\left\langle \operatorname{Rc},h\right\rangle
-2\operatorname{div}$$\left(  h\right)  \left(  \nabla f\right)  +h\left(
\nabla f,\nabla f\right)  $ satisfies $\frac{\partial Z}{\partial t}=\Delta
Z$.\footnote[2]{The same is true if we replace $\frac{\partial f}{\partial
t}=\Delta f$ by $\frac{\partial f}{\partial t}=\left\vert \nabla f\right\vert
^{2}$ essentially since $R=-\Delta f=1-\left\vert \nabla f\right\vert ^{2}$.}
I.e., the linear trace Harnack quantity solves the heat equation.
\end{lemma}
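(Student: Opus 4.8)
The plan is to substitute $X=-\nabla f$ into the master evolution equation \eqref{Z evolution for X} and to check that all four terms on its right-hand side vanish on a steady gradient Ricci soliton, giving $\left(\frac{\partial}{\partial t}-\Delta\right)Z=0$. The last three lines of \eqref{Z evolution for X} are governed by three structural quantities, namely $\nabla_j X^i-R_j^i$, the vector-field heat operator $\frac{\partial X^j}{\partial t}-\Delta X^j-R_k^j X^k$, and the matrix Harnack tensor $M_{pq}+2P_{ipq}X^i+R_{pijq}X^iX^j$; I would show that each of these vanishes, handling them in increasing order of difficulty.

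First I would compute $\nabla_j X^i-R_j^i$. Writing $X^i=-\nabla^i f$ and invoking the soliton equation $\nabla_i\nabla_j f=-R_{ij}$ gives $\nabla_j X^i=-\nabla_j\nabla^i f=R_j^i$, hence $\nabla_j X^i-R_j^i=0$. Because this tensor vanishes identically (and so does its raised form $\nabla^p X^j-R^{pj}$), the second line of \eqref{Z evolution for X}, carrying the factor $-4(\nabla_j X^i-R_j^i)\nabla^j(\cdots)$, and the fourth line, carrying $2h_{ij}(\nabla_p X^i-R_p^i)(\nabla^p X^j-R^{pj})$, are both immediately zero.

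Next I would show $\frac{\partial X^j}{\partial t}-\Delta X^j-R_k^j X^k=0$, which kills the third line. Differentiating $X^j=-g^{jk}\nabla_k f$ in time and using $\frac{\partial g^{jk}}{\partial t}=2R^{jk}$ and $\frac{\partial f}{\partial t}=\Delta f$ yields $\frac{\partial X^j}{\partial t}=-2R^{jk}\nabla_k f-\nabla^j\Delta f$, while the Bochner commutation identity $\Delta\nabla^j f=\nabla^j\Delta f+R_k^j\nabla^k f$ gives $\Delta X^j=-\nabla^j\Delta f-R_k^j\nabla^k f$ and $R_k^j X^k=-R_k^j\nabla^k f$. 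The $\nabla^j\Delta f$ terms cancel, and the two surviving Ricci terms combine to $-2R^{jk}\nabla_k f+2R_k^j\nabla^k f=0$. Notably this uses only the flow equations and Bochner, not the soliton structure, and it depends on $\frac{\partial f}{\partial t}$ only through $\nabla_k\frac{\partial f}{\partial t}$; this is exactly why the footnote's replacement $\frac{\partial f}{\partial t}=|\nabla f|^2$ gives the same conclusion, since $\nabla_k(|\nabla f|^2-\Delta f)=0$ because $\Delta f=-R$ and $R+|\nabla f|^2$ is constant.

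The remaining and substantive step is the first line: I must verify the matrix identity $M_{pq}+2P_{ipq}X^i+R_{pijq}X^iX^j=0$ for $X=-\nabla f$, which is precisely the assertion that a steady gradient soliton is the equality case of Hamilton's matrix Harnack estimate \cite{H2}. I would derive this from the curvature identities forced by the soliton equation: differentiating $R_{pq}=-\nabla_p\nabla_q f$ and commuting covariant derivatives gives $P_{ipq}=\nabla_i R_{pq}-\nabla_p R_{qi}=-[\nabla_i,\nabla_p]\nabla_q f$, i.e. $P_{ipq}$ equals the Riemann tensor contracted once with $\nabla f$ (up to the sign convention for $R$), so that $2P_{ipq}X^i$ becomes a curvature-$\nabla f\nabla f$ term; a second differentiation together with the contracted Bianchi identity and $\nabla_i R=2R_{ij}\nabla^j f$ reduces $M_{pq}=\Delta R_{pq}-\frac{1}{2}\nabla_p\nabla_q R+2R_{pijq}R^{ij}-R_{pk}R_q^k$ to terms that exactly cancel $2P_{ipq}X^i+R_{pijq}X^iX^j$. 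As a consistency check one can contract with $g^{pq}$: using $\Delta R+2|\operatorname{Rc}|^2=\langle\nabla R,\nabla f\rangle$ and $2R_{ij}\nabla^j f=\nabla_i R$ the trace collapses to $0$, recovering $Z(\operatorname{Rc},-\nabla f)=0$. I expect this matrix identity to be the main obstacle, as it needs the full package of steady-soliton identities rather than a single substitution; once it is established, all four lines of \eqref{Z evolution for X} vanish and $\frac{\partial Z}{\partial t}=\Delta Z$ follows.
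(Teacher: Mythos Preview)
Your proposal is correct and follows essentially the same approach as the paper: plug $X=-\nabla f$ into \eqref{Z evolution for X} and use the soliton identities to kill each line. The only stylistic difference is that for the first line the paper records the clean two-step chain $P_{ipq}=R_{pijq}\nabla^{j}f$ and $M_{pq}=P_{ipq}\nabla^{i}f$, which makes the vanishing of $M_{pq}-2P_{ipq}\nabla^{i}f+R_{pijq}\nabla^{i}f\nabla^{j}f$ an immediate telescoping; your ``second differentiation plus Bianchi'' description amounts to deriving exactly these identities, so you would streamline the write-up by stating them in that form.
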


\begin{proof}
By $M_{pq}=P_{ipq}\nabla^{i}f$, $P_{ipq}=R_{pijq}\nabla^{j}f$, $\nabla
_{j}\nabla^{i}f+R_{j}^{i}=0$, and $\left(  (\frac{\partial}{\partial t}
-\Delta)\nabla f\right)  ^{j}=R_{k}^{j}\nabla^{k}f$, for $Z\left( h, - \nabla
f\right)  $ each of the terms on the \textsc{rhs} of (\ref{Z evolution for X})
(with $X= - \nabla f$) are zero.
\end{proof}

\textbf{Remark.} (1) The quantity $Z(h,-\nabla f)$ arises naturally from the
following consideration. If $g_{ij}\left(  s\right)  ,f\left(  s\right)  $ are
such that $\frac{\partial}{\partial s}g_{ij}=h_{ij}$ and $\frac{\partial
}{\partial s}f=\frac{H}{2}$, where $H=g^{ij}h_{ij}$ (so that $\frac{\partial
}{\partial s}\left(  e^{-f}d\mu\right)  =0$; see \cite{Per1}), then Perelman's
scalar curvature satisfies%
\[
\frac{\partial}{\partial s}\left(  R+2\Delta f-\left\vert \nabla f\right\vert
^{2}\right)  =Z\left(  h,-\nabla f\right)  -2\left\langle h,\operatorname{Rc}%
+\nabla\nabla f\right\rangle .
\]
Note for a steady gradient Ricci soliton, $\frac{\partial}{\partial s}\left(
R+2\Delta f-\left\vert \nabla f\right\vert ^{2}\right)  =Z\left(  h,-\nabla
f\right)  $ while $\frac{\partial}{\partial t}\left(  e^{-f}d\mu\right)
=\left(  -\Delta f-R\right)  e^{-f}d\mu=0$.

(2) If $g\left(  t\right)  ,f\left(  t\right)  $ solves $\frac{\partial
g}{\partial t}=-2\operatorname{Rc}$ and $\frac{\partial f}{\partial t}=-\Delta
f+\left\vert \nabla f\right\vert ^{2}-R$, then $V\doteqdot(2\Delta f-|\nabla
f|^{2}+R)e^{-f}$ satisfies $\square^{\ast}V=-2|R_{ij}+\nabla_{i}\nabla
_{j}f|^{2}e^{-f}$, where $\square^{\ast}=-\frac{\partial}{\partial t}%
-\Delta+R$ (see \cite{Per1}).\smallskip

The shrinker analogue of Lemma \ref{Lemon 1} is the following.

\begin{lemma}
If $\left(  \mathcal{M}^{n},g\left(  t\right)  ,f\left(  t\right)  \right)  $,
$t<0$, satisfies $\frac{\partial}{\partial t}g=-2\operatorname{Rc}%
=2\nabla\nabla f+\frac{1}{t}g$, $\frac{\partial f}{\partial t}=\left\vert
\nabla f\right\vert ^{2}=\Delta f-\frac{1}{t}f$, and $\frac{\partial}{\partial
t}h=\Delta_{L}h$, then $\left(  \frac{\partial}{\partial t}-\Delta\right)
\left(  t^{2}\left(  Z\left( h, -\nabla f\right)  +\frac{H}{2t}\right)
\right)  =0$.
\end{lemma}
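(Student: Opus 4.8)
The plan is to run exactly the scheme of Lemma~\ref{Lemon 1}: feed $X=-\nabla f$ into the master evolution equation (\ref{Z evolution for X}) and evaluate its four right-hand-side terms, except that on a shrinker the soliton equation reads $R_{ij}+\nabla_i\nabla_j f=-\frac{1}{2t}g_{ij}$, so the identities used there now carry $\frac{1}{2t}$-corrections. With $X^i=-\nabla^i f$ I would first record the three modified identities: (i) $\nabla_j X^i-R_j^i=\frac{1}{2t}\delta_j^i$, hence $\nabla^i X^k=R^{ik}+\frac{1}{2t}g^{ik}$; (ii) the matrix Harnack evaluated at $-\nabla f$ is no longer zero but $M_{pq}+2P_{ipq}X^i+R_{pijq}X^iX^j=-\frac{1}{2t}R_{pq}$; and (iii) $\frac{\partial X^j}{\partial t}-\Delta X^j-R_k^j X^k=\frac1t\nabla^j f=-\frac1t X^j$, which follows from $\frac{\partial f}{\partial t}=\Delta f-\frac ft$ together with the Bochner formula $\Delta\nabla^j f=\nabla^j\Delta f+R^{jk}\nabla_k f$ and $\frac{\partial}{\partial t}g^{jk}=2R^{jk}$.

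Next I evaluate the four terms. Writing $D=\operatorname{div}(\operatorname{div}h)$, $C=\langle\operatorname{Rc},h\rangle=R^{ij}h_{ij}$, $E=\operatorname{div}(h)_kX^k$, and $F=h(X,X)$, so that $Z=D+C+2E+F$: the fourth term equals $2h_{ij}\frac{1}{2t}\delta_p^i\frac{1}{2t}g^{pj}=\frac{H}{2t^2}$; the third term equals $-\frac2t(E+F)$ by (iii); the second term equals $-\frac2t\,\nabla^i(\operatorname{div}(h)_i+h_{ik}X^k)$ by (i), and expanding with $\nabla^i X^k=R^{ik}+\frac{1}{2t}g^{ik}$ this becomes $-\frac2t(D+C+E)-\frac{H}{t^2}$; and the first term equals $2h^{pq}\!\left(-\frac{1}{2t}R_{pq}\right)=-\frac1t C$ by (ii). Summing and using $Z=D+C+2E+F$, the $D,E,F$ contributions assemble into $-\frac2t Z$ and I obtain
\[
\left(\frac{\partial}{\partial t}-\Delta\right)Z=-\frac2t Z-\frac1t\langle\operatorname{Rc},h\rangle-\frac{H}{2t^2}.
\]

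Separately I would compute $\left(\frac{\partial}{\partial t}-\Delta\right)\frac{H}{2t}$. Since the trace of the Lichnerowicz Laplacian is the Laplacian of the trace, $\frac{\partial H}{\partial t}=2R^{ij}h_{ij}+\Delta H$, whence $\left(\frac{\partial}{\partial t}-\Delta\right)\frac{H}{2t}=\frac1t\langle\operatorname{Rc},h\rangle-\frac{H}{2t^2}$. Adding this to the displayed equation, the $\langle\operatorname{Rc},h\rangle$ terms cancel and the two $\frac{H}{2t^2}$ terms combine to give
\[
\left(\frac{\partial}{\partial t}-\Delta\right)\!\left(Z+\frac{H}{2t}\right)=-\frac2t\left(Z+\frac{H}{2t}\right),
\]
which is exactly the assertion $\left(\frac{\partial}{\partial t}-\Delta\right)\!\big(t^2(Z+\frac{H}{2t})\big)=0$, using $\left(\frac{\partial}{\partial t}-\Delta\right)(t^2W)=2tW+t^2\left(\frac{\partial}{\partial t}-\Delta\right)W$.

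The step I expect to be the main obstacle is establishing identity (ii), the shrinker refinement of the vanishing $M_{pq}=P_{ipq}\nabla^i f$ used in Lemma~\ref{Lemon 1}. Since $P_{ipq}=R_{pijq}\nabla^j f$ continues to hold on a shrinker (its derivation only differentiates the soliton equation, and the constant $\frac{1}{2t}$ drops under $\nabla$), the first two pieces of the Harnack quadratic collapse to $2P_{ipq}X^i+R_{pijq}X^iX^j=-R_{pijq}\nabla^i f\nabla^j f=-P_{ipq}\nabla^i f$, so (ii) is equivalent to $M_{pq}=P_{ipq}\nabla^i f-\frac{1}{2t}R_{pq}$. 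Proving this requires the soliton evaluation of $\Delta R_{pq}$ and of $\nabla_p\nabla_q R$: one differentiates the soliton equation twice and uses the contracted second Bianchi identity $\nabla^iR_{ik}=\frac12\nabla_kR$ together with $\nabla_kR=2R_{k\ell}\nabla^\ell f$, commuting covariant derivatives. The delicate point is that all curvature-quadratic contributions (the $2R_{pijq}R^{ij}$ and $-R_{pk}R_q^k$ in $M_{pq}$, together with those produced by $\Delta R_{pq}$) must cancel, leaving only the first-order term $P_{ipq}\nabla^i f$ and the single zeroth-order term $-\frac{1}{2t}R_{pq}$; keeping the Riemann-tensor sign and contraction conventions consistent throughout is where the care is needed.
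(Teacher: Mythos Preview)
Your proposal is correct and follows exactly the same route as the paper: the paper also plugs $X=-\nabla f$ into (\ref{Z evolution for X}) using precisely your identities (i)--(iii) (written there as $M_{pq}+\frac{1}{2t}R_{pq}=P_{ipq}\nabla^{i}f$, $P_{ipq}=R_{pijq}\nabla^{j}f$, $R_{ij}+\nabla_i\nabla_j f=-\frac{1}{2t}g_{ij}$, and $\big((\frac{\partial}{\partial t}-\Delta)\nabla f\big)^{j}-R_{k}^{j}\nabla^{k}f=-\frac{1}{t}\nabla^{j}f$) to obtain $(\frac{\partial}{\partial t}-\Delta)Z=-\frac{2}{t}Z-\frac{1}{t}\langle h,\operatorname{Rc}\rangle-\frac{H}{2t^{2}}$, then combines with $(\frac{\partial}{\partial t}-\Delta)H=2\langle h,\operatorname{Rc}\rangle$. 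Your term-by-term accounting and your discussion of how to verify (ii) simply fill in details the paper leaves implicit.
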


\begin{proof}
Applying $M_{pq}+\frac{1}{2t}R_{pq}=P_{ipq}\nabla^{i}f$, $P_{ipq}%
=R_{pijq}\nabla^{j}f$, $R_{ij}+\nabla_{i}\nabla_{j}f=-\frac{1}{2t}g_{ij}$, and
$\left(  (\frac{\partial}{\partial t}-\Delta)\nabla f\right)  ^{j}-R_{k}%
^{j}\nabla^{k}f=-\frac{1}{t}\nabla^{j}f$ to (\ref{Z evolution for X}) yields
$\left(  \frac{\partial}{\partial t}-\Delta\right)  Z\left(  h,-\nabla
f\right)  =-\frac{2}{t}Z\left(  h,-\nabla f\right)  -\frac{1}{t}\left\langle
h,\operatorname{Rc}\right\rangle -\frac{H}{2t^{2}}$. Because $\left(
\frac{\partial}{\partial t}-\Delta\right)  H=2\left\langle h,\operatorname{Rc}%
\right\rangle $, we conclude%
\[
\left(  \frac{\partial}{\partial t}-\Delta\right)  \left(  Z\left(  h,-\nabla
f\right)  +\frac{H}{2t}\right)  =-\frac{2}{t}\left(  Z\left(  h,-\nabla
f\right)  +\frac{H}{2t}\right)  .
\]

\end{proof}

\textbf{Remark.} If $h=\operatorname{Rc}$, then for a shrinker, since
$\frac{1}{2}\Delta R+\left\vert \operatorname{Rc}\right\vert ^{2}=\frac{1}%
{2}\left\langle \nabla R,\nabla f\right\rangle -\frac{R}{2t}$ and $H=R$, we
have that $Z\left(  h,-\nabla f\right)  +\frac{H}{2t}=Z\left(
\operatorname{Rc},-\nabla f\right)  +\frac{R}{2t}=0$.\smallskip

\textbf{2. Interpolating between Perelman's and Cao--Hamilton's Harnacks on a
steady soliton}\smallskip

Now consider the system $\frac{\partial}{\partial t}g_{ij}=-2R_{ij}%
=2\nabla_{i}\nabla_{j}f$, $\frac{\partial f}{\partial t}=\left\vert \nabla
f\right\vert ^{2}$, and $\frac{\partial u}{\partial t}=\Delta u+Ru$, $u>0$
(when $n=2$ this is the linearized Ricci flow).\footnote[3]{Note that
$\frac{\partial}{\partial t}e^{f}=\Delta e^{f}+Re^{f}$. The trivial case of
the following calculations is $v=f$.} Mimicking Li, Yau, and Hamilton, define
$v=\log u$, $Q=\Delta v+R$, and $L=\frac{1}{2}(\frac{\partial}{\partial
t}-\Delta)-\nabla v\cdot\nabla$. Then%
\begin{equation}
LQ=\left\vert \nabla\nabla v\right\vert ^{2}+\left\langle \operatorname{Rc}%
,\nabla\nabla v\right\rangle +\operatorname{Rc}(\nabla(v-f),\nabla(v-f)).
\label{ElleQueue}%
\end{equation}
Following X.\thinspace Cao and Hamilton \cite{XCaoHamilton}, define the
Harnack $P=2Q+\left\vert \nabla v\right\vert ^{2}+R$. Since%
\[
L\left(  \left\vert \nabla v\right\vert ^{2}+R\right)  =\left\vert
\operatorname{Rc}\right\vert ^{2}-\left\vert \nabla\nabla v\right\vert ^{2},
\]
we have that $P=2\Delta v+\left\vert \nabla v\right\vert ^{2}+3R$ satisfies
the parabolic Bochner-type formula\footnote[4]{Note that $\nabla\nabla
v+\operatorname{Rc}=\nabla\nabla\left(  v-f\right)  $.}%
\[
LP=\left\vert \nabla\nabla v+\operatorname{Rc}\right\vert ^{2}%
+2\operatorname{Rc}(\nabla(v-f),\nabla(v-f)).
\]
In particular, if $\operatorname{Rc}\geq0$, then $LP\geq\frac{1}{n}Q^{2}\geq0$.

More generally, if $\frac{\partial u}{\partial t}=\varepsilon^{-1}\Delta u+Ru$
(interpolating), where $\varepsilon\in\mathbb{R}-\{0\}$, then $P_{\varepsilon
}=2\Delta v+\left\vert \nabla v\right\vert ^{2}+\left(  2\varepsilon+1\right)
R$ satisfies the heat equation%
\begin{align*}
L_{\varepsilon}P_{\varepsilon}  &  =\varepsilon^{-1}\left\vert \nabla\nabla
v\right\vert ^{2}+2\left\langle \operatorname{Rc},\nabla\nabla v\right\rangle
+\varepsilon^{-1}\left\vert \operatorname{Rc}\right\vert ^{2}\\
&  \quad\;+2\varepsilon^{-1}\operatorname{Rc}(\nabla(v-\varepsilon
f),\nabla(v-\varepsilon f))+\left(  1-\varepsilon^{-1}\right)
\operatorname{Rc}\left(  \nabla\left(  v+f\right)  ,\nabla\left(  v+f\right)
\right)  ,
\end{align*}
where $L_{\varepsilon}=\frac{1}{2}(\frac{\partial}{\partial t}-\varepsilon
^{-1}\Delta)-\varepsilon^{-1}\nabla v\cdot\nabla$. The Ricci terms may also be
rewritten as $\left(  1+\varepsilon^{-1}\right)  \operatorname{Rc}%
(\nabla\left(  v-f\right)  ,\nabla\left(  v-f\right)  )+2\left(
\varepsilon-\varepsilon^{-1}\right)  \operatorname{Rc}(\nabla f,\nabla f)$. If
$\varepsilon=-1$, we have%
\[
\left(  \frac{1}{2}(\frac{\partial}{\partial t}+\Delta)+\nabla v\cdot
\nabla\right)  \left(  2\Delta v+\left\vert \nabla v\right\vert ^{2}-R\right)
=-\left\vert \operatorname{Rc}-\nabla\nabla v\right\vert ^{2}=-\left\vert
\nabla\nabla\left(  f+v\right)  \right\vert ^{2}.
\]
This is a special case of Perelman's pointwise energy monotonicity
formula. We interpolated the calculations but not the estimates
between $\varepsilon ^{-1}=-1$ and $\varepsilon^{-1}=1$; for
successful interpolations between Li--Yau--Hamilton inequalities,
see Ni \cite{Ni1}, \cite{Ni05}.

\markright{LINEAR TRACE HARNACK ON A STEADY GRADIENT RICCI SOLITON}

\end{document}